\newtheorem{thm}{Theorem}
\newtheorem{prop}[thm]{Proposition}
\newtheorem{lemma}[thm]{Lemma}
\newtheorem{cor}[thm]{Corollary}
\newcommand{\tto}{\longrightarrow}
\newcommand{\CC}{\mathbb C}
\newcommand{\bfami}{\mathcal B}
\newcommand{\dfami}{\mathcal D}
\newcommand{\ord}{\operatorname{ord}}
\newcommand{\inv}{^{^{-1}}}
\begin{document}

\title{On a discriminant knot group problem of Brieskorn}

%\color{magenta}
\author{Michael L\"onne}
\address{Michael L\"onne\\ Mathematik VIII\\ Universit\"at Bayreuth\\
Universit\"atstrasse 30\\  \phantom{xxxx} \hspace*{1cm}95447 Bayreuth\\ Germany
}
\email{michael.loenne@uni-bayreuth.de}

\begin{abstract}
Quite some time ago, at the singularity conference at Carg\`ese 1972
Brieskorn asked the following question:
\begin{quote}
Is the local fundamental group 
$\pi_1^{s}(S-D)$ of the discriminant complement inside the
semi-universal unfolding $S$ of an isolated hypersurface singularity
constant for $s$ in the $\mu$-constant stratum $\Sigma_E$?
\end{quote}
We review this question and give an affirmative answer in case of
singular plane curve germs of multiplicity at most $3$.
\end{abstract}
\subjclass[2010]{32S50 primary, 14D05, 32S55, 20F36 secondary}
\keywords{plane curve singularity, discriminant knot group, $\mu$-constant stratum,
braid monodromy}
%\rightline{\scriptsize \sffamily Preliminary Version \today}
\thanks{The present work took place in the framework  of the 
 ERC Advanced grant n. 340258, `TADMICAMT' }

\maketitle

%%%%%%%%%%%%%%%%%%%%%%%%%%%%%%%%
\section{Introduction}

The question of Brieskorn was published in Astérisque 7-8, \emph{Colloque
sur les singularités en géometrie analytique}. 
In that article Brieskorn gives a summary of the problems and questions he
considers central in the investigation of monodromy, and their answers which
-- as he writes -- will help much to arrive at a more profound understanding, 
\cite{BrieskornCargese}.

In Brieskorns view the local fundamental group of the discriminant
complement -- the discriminant knot group as it will be called in
the present article  -- %\footnote{cf. the abstract of  \cite{IJM}}
lies at the heart of the study of the algebraic monodromy and
the intersection lattice of the Milnor fibre and should soon reveal
to contain more or less the same amount of information.

This optimism probably resulted in the spectacular success in the
study of simple hypersurface singularities where Brieskorn himself 
made important contributions, \cite{MR0293615,MR0437798,MR0323910}. For the simple singularities the algebra, the
geometry and the combinatorial group theory are most closely tied
together and hope was widespread to get similar results for
more general singularities under suitable forms of relaxation.

However, the topology of the discriminant complement remains
a mystery to the present day, and only little progress has been
made on the problems Brieskorn addressed to it.

In this article we will review the problem stated in the abstract
\begin{quote}
Is the discriminant knot group $\pi_1^{s}(S-D)$ of an isolated 
hypersurface singularity constant for $s$ in the $\mu$-constant 
stratum?
\end{quote}

At the time of writing the evidence in favour of a positive answer had two aspects.
First in the case of simple singularities the answer is trivially positive.
Second, the homomorphic image under algebraic monodromy is 
constant along the $\mu$-constant stratum.

On the other hand an article of Pham \cite{PhamCargese}
presented at the very conference at Carg\`ese
was interpreted by Brieskorn as evidence in favour of a negative answer: Pham
showed that the topological type of the generic discriminant curve of certain plane curve singularities of multiplicity $m=3$ is not constant along the $\mu$-constant
stratum.

In fact, Brieskorn proposes to study the discriminant knot group by the local
Zariski hyperplane theorem as proved by Le and Hamm \cite{hl}:
\[
\pi_1(S-D) \quad \cong\quad
\pi_1(H- H\cap D)
\]
where $H$ is a plane in $S$ parallel to
a generic plane $H_0\neq H$ through the origin.
$H_0\cap D$ is called \emph{a generic discriminant curve} and 
$H\cap D$ a corresponding \emph{unfolded generic discriminant curve}.
The topological type of the former is constant along the $\mu$-constant stratum 
if and only if the topological type of the latter is.

Therefore the result of Pham shows that the line of argument which Brieskorn
had in mind cannot work.

In this article, however, we will follow Brieskorns strategy and bridge the gap
by using a stronger form of the Zariski van Kampen method applicable to
more general plane sections of the discriminant.

We will turn the Pham examples into evidence for a positive answer to
Brieskorns problem by the following theorem.

\begin{thm}
Suppose $f$ is a plane curve singularity of multiplicity at most $3$,
then the discriminant knot group is constant along the $\mu$-constant
stratum.
\end{thm}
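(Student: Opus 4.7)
My plan is to exploit the very low multiplicity hypothesis by reducing, after a coordinate change and Weierstrass preparation, to a Weierstrass polynomial of degree at most $3$ in one of the variables. The multiplicity $1$ case is trivial (smooth germ), and the multiplicity $2$ case is an $A_k$ singularity where the knot group is the classical type $A$ Artin group and is constant on nose along the (zero dimensional) $\mu$-constant stratum. So the content lies in multiplicity exactly $3$, where after a Tschirnhaus transformation the germ takes the form $f(x,y) = y^3 + a(x) y + b(x)$, and a semi-universal unfolding is obtained by deforming the coefficients $a, b$ within finite dimensional complements of the Jacobian ideal.

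With this preparation, the unfolding space $S$ comes equipped with a natural map to the $(A,B)$-plane of coefficients of a generic cubic, and the discriminant hypersurface $D$ is the preimage of the cuspidal cubic $4A^3 + 27 B^2 = 0$ under the evaluation map $x\mapsto (A(x), B(x))$. I would then choose a two dimensional plane $H \subset S$ \emph{not} of the kind Brieskorn suggested, but adapted to the cubic-in-$y$ structure and to the map $x\mapsto (A(x),B(x))$, in such a way that the Hamm--L\^e local Zariski--Lefschetz theorem still applies (so that $\pi_1(H-H\cap D)\cong \pi_1^s(S-D)$) and the projection of $H\cap D$ to a coordinate line is a finite map for which the monodromy of the roots of the cubic takes values in the $3$-strand braid group $B_3$. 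Applying the stronger Zariski--van Kampen method for such non-generic pencils, the fundamental group $\pi_1(H-H\cap D)$ receives a finite presentation whose generators are meridians of $H\cap D$ and whose relators encode the associated braid monodromy.

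The heart of the argument, and the point at which Pham's examples are turned from a counter-indication into supporting evidence, is the claim that although the topological type of $H\cap D$ varies along $\Sigma_E$, the presentation extracted above does not. Concretely, the braid monodromy words live in $B_3$ and are determined by the combinatorial pattern of coincidences of the three roots of the cubic, which is an invariant of the $\mu$-constant stratum, not of the curve $H\cap D$ itself. I expect the continuous deformations Pham exhibits to correspond precisely to isotopies in $B_3$ that preserve the conjugacy data appearing in the van Kampen relators. Once this invariance is checked, the isomorphism class of $\pi_1(H-H\cap D)$, hence of the discriminant knot group, is constant along $\Sigma_E$.

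The main obstacle will be the verification that a suitable non-generic $H$ exists satisfying simultaneously the transversality hypotheses of the Hamm--L\^e theorem, the good behaviour of the braid monodromy pencil, and the stability of the van Kampen presentation along $\Sigma_E$. This is where the multiplicity $\leq 3$ assumption is essential: only then are the roots of the Weierstrass polynomial few enough that $B_3$ monodromy controls everything, and only then do the normal forms along the $\mu$-constant stratum remain simple enough to keep the braid words rigid in the required sense.
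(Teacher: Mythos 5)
Your plan shares the paper's overall philosophy---abandon generic plane sections and run a stronger Zariski--van Kampen argument on sections adapted to the cubic-in-$y$ normal form---but as written it has genuine gaps exactly where the work lies. First, your central claim, that the braid monodromy words are ``determined by the combinatorial pattern of coincidences of the three roots of the cubic'' and that this pattern is an invariant of the $\mu$-constant stratum, is not established and cannot be taken on faith: Pham's theorem says precisely that for generic sections the topology of the discriminant curve (hence the braid monodromy data) \emph{does} change along $\Sigma_E$ when $\sigma$ jumps, so any invariance statement must be proved for a specifically constructed section. The paper does this by explicit Newton polygon computations for the Brieskorn--Pham polynomials $y^3+x^{\nu+1}$: it exhibits a perturbation $t\,xy$ (resp.\ $t(xy+x^{3\rho+4})$ in an exceptional congruence case) whose discriminant curve is reduced and of constant topological type along a $\mu$-constant deformation parameter $s$ that moves $\sigma$ from $\nu$ to any admissible value $m$. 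Second, you ask that the Hamm--L\^e theorem ``still apply'' to your non-generic plane $H$; but the whole point of leaving the generic setting is that Hamm--L\^e does not directly apply there, and the paper replaces it by a different criterion: if the discriminant curve of the one-parameter sub-unfolding $f-u+tg_1$ is \emph{reduced}, then the bifurcation set is horizontal for the projection along the $g_1$-direction, the fundamental group of a generic vertical line surjects onto that of the bifurcation complement, the two braid monodromy groups coincide, and van Kampen then identifies $\pi_1$ of the section's complement with the discriminant knot group. Without some such substitute criterion your choice of $H$ is unjustified.

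There is also a misconception about where the braid monodromy lives: the monodromy relevant to the van Kampen presentation acts on the $\mu$ punctures of a fibre (the critical values of the perturbed function), so it takes values in $Br_\mu$, not in the $3$-strand group; the three roots of the cubic in $y$ enter only through the elimination producing the discriminant curve. Multiplicity $3$ is used quite differently in the paper: via Arnol'd's classification the non-Brieskorn--Pham classes $J_{k,i}$ and $E_{6k+1}$ have constant $\sigma$ along $\Sigma_E$, so Pham's $(\mu,\sigma)$-classification of generic discriminant curves together with Hamm--L\^e already settles them, and the Brieskorn--Pham case is handled by the explicit non-generic family above. Your uniform ``rigid braid words'' argument also skips the bridging step needed to connect an arbitrary member of the stratum to the Brieskorn--Pham point: the paper bridges the fixed-$\sigma$ directions with Pham's result and the $\sigma$-jumping direction with the constructed non-generic section, and both bridges are needed.
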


As remarked before, in the case of simple singularities the
claim trivially holds true. 
By classification this settles the case of multiplicity $2$ and
of plane curve singularities of Milnor number at most $8$.

As a direct corollary we can sharpen the result of \cite{IJM}. Suppose $f$
is topologically equivalent to a plane curve singularities of Brieskorn Pham type
of multiplicity $3$
\[
f \quad \sim_{top} \quad y^3 + x^{\nu+1} \qquad \mbox{for some } \nu\geq 2,
\]
then $f$ is a $\mu$-constant deformation of the Brieskorn Pham singularity 
and has the same distinguished Dynkin diagram

\begin{figure}[ht]
\setlength{\unitlength}{1.8mm}
\begin{picture}(60,15)(-7,-4)

\drawline(2,0)(8,0)
\drawline(6,0)(4,-1)
\drawline(6,0)(4,1)
\drawline(2,10)(8,10)
\drawline(6,10)(4,9)
\drawline(6,10)(4,11)
\drawline(0,2)(0,8)
\drawline(0,6)(-1,4)
\drawline(0,6)(1,4)
\drawline(12,0)(18,0)
\drawline(16,0)(14,-1)
\drawline(16,0)(14,1)
\drawline(12,10)(18,10)
\drawline(16,10)(14,9)
\drawline(16,10)(14,11)
\drawline(20,2)(20,8)
\drawline(20,6)(19,4)
\drawline(20,6)(21,4)
\drawline(10,2)(10,8)
\drawline(10,6)(9,4)
\drawline(10,6)(11,4)
\drawline(40,2)(40,8)
\drawline(40,6)(39,4)
\drawline(40,6)(41,4)

\drawline(1.5,1.5)(8.5,8.5)
\drawline(6,6)(5,3.6)
\drawline(6,6)(3.6,5)
\drawline(11.5,1.5)(18.5,8.5)
\drawline(16,6)(13.6,5)
\drawline(16,6)(15,3.6)

\bezier{2}(27,0)(30,0)(33,0)
\bezier{2}(27,10)(30,10)(33,10)

\put(0,0){\circle*{1}}
\put(10,0){\circle*{1}}
\put(10,10){\circle*{1}}
\put(0,10){\circle*{1}}
\put(20,0){\circle*{1}}
\put(20,10){\circle*{1}}
\put(40,0){\circle*{1}}
\put(40,10){\circle*{1}}

\put(1,-3){11}
\put(11,-3){12}
\put(41,-3){$1\nu$}
\put(11,7){22}
\put(41,7){$2\nu$}
\put(1,7){21}
\put(21,-3){13}
\put(21,7){23}

\end{picture}
\caption{Dynkin diagram of $y^3+x^{\nu+1}$}
\label{dyn}
\end{figure}
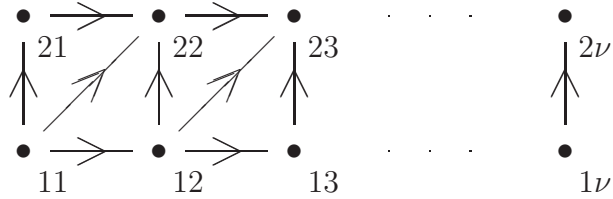

\noindent
where the set $V$ of vertices is ordered by the lexicographic order of their double 
indices and the set $E$ of oriented edges contains the pair of corresponding
vertices only in their proper order.

Since the discriminant knot group by Theorem $1$ is the same for $f$ and the 
Brieskorn Pham singularity we get from \cite[Thm1.1]{IJM}.
(Does it extend to all cases in \cite{CR07}?)

\begin{thm}
Suppose $f$ is topologically equivalent to a Brieskorn Pham polynomial
$y^3+x^{\nu+1}$ then its discriminant knot group is presented by
\[
\left\langle t_i,\: i\in V \:\Bigg | \:
\begin{array}{cl}
t_i t_j = t_j t_i & (i,j),(j,i) \not\in E \\
t_i t_j t_i = t_j t_i t_j & (i,j) \in E \\
t_i t_k t_j t_i = t_j t_i t_k t_j & (i,j),(i,k),(j,k) \in E
\end{array}
\right\rangle
\]
\end{thm}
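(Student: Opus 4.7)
The plan is to chain Theorem~1 with \cite[Thm1.1]{IJM}: the former supplies $\mu$-constant invariance of the discriminant knot group in multiplicity $3$, the latter computes this group explicitly for the Brieskorn Pham representative of the stratum.

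First, I would spell out the assertion already made in the paragraph preceding the theorem. Since topological equivalence preserves multiplicity and the full topological invariants of a plane curve germ, $f$ has multiplicity $3$ and is realized as a $\mu$-constant deformation of $y^3+x^{\nu+1}$; equivalently, $f$ and $y^3+x^{\nu+1}$ sit in a common $\mu$-constant stratum $\Sigma_E$ of a semi-universal unfolding.

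Second, I apply Theorem~1 to $\Sigma_E$. Since the multiplicity equals $3$, the theorem gives that $\pi_1^{s}(S-D)$ is independent of $s\in\Sigma_E$, so the discriminant knot group of $f$ is isomorphic to that of $y^3+x^{\nu+1}$.

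Third, I invoke \cite[Thm1.1]{IJM}, which presents the discriminant knot group of $y^3+x^{\nu+1}$ by generators $t_i$, $i\in V$, subject to the three families of relations read off from the oriented edge set $E$ exactly in the form displayed in the statement. Transporting this presentation along the isomorphism from the previous step finishes the proof. The only subtlety is a matter of convention: one must verify that the ordering of $V$ and the orientation of $E$ fixed after Figure~\ref{dyn} match the generator-and-relation conventions of \cite[Thm1.1]{IJM}, so that the commutation, braid, and four-term relations transcribe verbatim. This is bookkeeping rather than a genuine obstacle, and no further geometric or algebraic input beyond Theorem~1 and the cited presentation is required.
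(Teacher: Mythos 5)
Your proposal matches the paper's own derivation: the paper obtains Theorem~2 exactly by noting that $f$, being topologically equivalent to $y^3+x^{\nu+1}$, is a $\mu$-constant deformation of it, applying Theorem~1 to identify the two discriminant knot groups, and then transporting the presentation of \cite[Thm1.1]{IJM}. Your remark about matching the vertex ordering and edge orientation conventions of Figure~\ref{dyn} with \cite{IJM} is precisely the bookkeeping the paper also relies on, so nothing is missing.
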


A step beyond the result of this article might address the case
of unimodal hypersurface singularities. Possibly it is
sufficient to look at the generic discriminant curve, since in the
cases not covered by our result, Greuel \cite{Gr77,Gr78} 
has shown that at least
the number of cusps of the unfolded generic discriminant 
curve is constant along the $\mu$-constant stratum.

%%%%%%%%%%%%%%%%%%%%%%%%%%%%%%%%
\section{Review of the results of Pham}

In his article \cite{PhamCargese} Pham provides a careful analysis of the generic 
discriminant curve in case of a plane curve singularity of multiplicity $3$
\[
f \quad = \quad y^3 - P(x) y + Q(x).
\]
While skipping his calculation which we will mimic in the next section,
here we only want to introduce the minimum of notation to state his results
and draw some first conclusions towards the proof of our main theorem.

In addition to the notorious Milnor number
\[
\mu \quad = \quad \dim \CC[X,Y] / \langle f_x, f_y \rangle
\]
Pham needs the analytic \emph{$\sigma$-invariant} associated to the ideal
generated by $f$ and its derivatives up to second order
\[
\sigma \quad = \quad1+ \dim \CC[X,Y] / \langle f, f_x, f_y, f_{xx}, f_{xy}, f_{yy} \rangle.
\]
He also %introduces $\nu$ as the order of $Q$ and 
gives some useful formulas for calculations:

\begin{lemma}[\cite{PhamCargese} \S1,p.366]
\label{sigma}
If $f$ is a function germ as above, the analytic invariant $\sigma$
is given by
\[
\sigma \quad=\quad \min \{ \ord P, \ord Q' \}
\]
and the Milnor number is given by
\[
\mu \quad=\quad \ord (3 Q'^2 - P P'^2).
\]
\end{lemma}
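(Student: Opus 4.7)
The plan is to reduce both dimension counts to lengths in the DVR $\CC\{x\}$, exploiting the fact that $f_y = 3y^2 - P$ is monic of degree $2$ in $y$. Writing out the partials gives $f_x = -P'y + Q'$, $f_{xx} = -P''y + Q''$, $f_{xy} = -P'$, $f_{yy} = 6y$, which makes both ideals completely explicit.

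For the Milnor number, set $A := \CC\{x\}[y]/(3y^2 - P)$. By Weierstrass preparation (or directly from $f_y$ being $y$-monic), $A$ is a free $\CC\{x\}$-module of rank $2$ with basis $\{1, y\}$, and the Jacobian algebra coincides with $A/(f_x)A$. The involution $y \mapsto -y$ is a $\CC\{x\}$-algebra automorphism of $A$, and setting $\alpha := -P'y + Q'$ one computes the norm
\[
N(\alpha) \: = \: (-P'y + Q')(P'y + Q') \: = \: Q'^2 - P'^2 y^2 \: = \: \tfrac{1}{3}(3Q'^2 - P P'^2)
\]
in $A$. Multiplication by $\alpha$ is a $\CC\{x\}$-linear endomorphism of $A$ whose determinant equals $N(\alpha)$; by Smith normal form over the DVR $\CC\{x\}$ its cokernel has $\CC$-dimension $\ord N(\alpha)$. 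Since $\tfrac13$ is a unit, this yields $\mu = \ord(3Q'^2 - PP'^2)$.

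For the $\sigma$-formula, the relation $f_{yy} = 6y$ forces $y$ into the ideal, so I would set $y = 0$ and reduce to computing $\dim_\CC \CC\{x\}/J$ for
\[
J \: = \: \langle P,\, P',\, Q,\, Q',\, Q''\rangle \: \subset \: \CC\{x\}.
\]
The isolated-critical-point hypothesis gives $P(0) = Q(0) = Q'(0) = 0$, so $\ord P, \ord Q, \ord Q' \geq 1$; in characteristic zero differentiation drops each such order by exactly one, so $\ord P' = \ord P - 1$ and $\ord Q'' = \ord Q' - 1$ are the two smallest orders present among the generators of $J$. Hence $\dim_\CC \CC\{x\}/J = \min\{\ord P', \ord Q''\} = \min\{\ord P, \ord Q'\} - 1$, and adding the $+1$ gives $\sigma = \min\{\ord P, \ord Q'\}$.

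The step requiring the most care is the $\mu$-computation: to apply Smith normal form one needs simultaneously that $A$ is finite free over $\CC\{x\}$ (automatic from the $y$-monic form of $f_y$) and that $N(\alpha) \neq 0$, equivalently $\mu < \infty$, which is exactly the isolated-singularity hypothesis. The $\sigma$-step is then a routine inspection of orders in a DVR.
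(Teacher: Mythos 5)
Your argument is correct. Note, however, that the paper itself offers no proof of this lemma: it is quoted from Pham's article (\S1, p.\ 366), so there is no in-text argument to compare with; what you have produced is a self-contained derivation. Your route is essentially the classical elimination computation in module-theoretic dress: the determinant of multiplication by $f_x=-P'y+Q'$ on the rank-two free $\CC\{x\}$-module $\CC\{x\}[y]/(3y^2-P)$ is exactly the resultant $\mathrm{Res}_y(f_y,f_x)=3Q'^2-PP'^2$ (your norm $N(\alpha)$, up to the unit $\tfrac13$), and the length-equals-valuation-of-determinant step via Smith normal form is the standard way to see that the order of this resultant computes the intersection multiplicity $\mu=\dim\CC\{x,y\}/(f_x,f_y)$; this is in the same spirit as Pham's elimination of $y$. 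The two places where care is needed are both handled correctly: the identification $\CC\{x,y\}/(f_y)\cong\CC\{x\}[y]/(f_y)$ via Weierstrass division (legitimate since $P(0)=0$ makes $y^2-P/3$ a Weierstrass polynomial), and the nonvanishing of $3Q'^2-PP'^2$, which you rightly tie to the isolated-singularity hypothesis. For the $\sigma$-formula, reducing modulo $f_{yy}=6y$ to the ideal $\langle P,P',Q,Q',Q''\rangle\subset\CC\{x\}$ and taking the minimal order is exactly right; the only implicit conventions are that $P(0)=Q(0)=Q'(0)=0$ (forced by $f$ being singular at the origin, as you note) and the usual $\ord 0=\infty$ reading when $P\equiv 0$ or $Q\equiv 0$, under which your bookkeeping still goes through. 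So the proposal stands as a complete proof of the cited formulas.
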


%\paragraph{Pham:} There is a characterization of the cases when $\mu=2\nu$,
%which contains the condition $3\sigma-2\geq2\nu$.

Instead of citing the main result in its full strength, which is a complete topological
classification of generic discriminant curves, we distill the essence,
what we will need below.

\begin{prop}[cf. \cite{PhamCargese}]
\label{pham}
The topological type of the generic discriminant curve
only depends on the topological invariant $\mu$ and the analytic
invariant $\sigma$.
\end{prop}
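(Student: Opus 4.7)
My plan is to mimic Pham's original calculation (as announced for the next section) by choosing an explicit generic two-parameter unfolding and parameterizing its discriminant curve. For a germ in Tschirnhaus form $f = y^3 - P(x) y + Q(x)$, take
\[
F(x,y,a,b) \;=\; y^3 - P(x)y + Q(x) + a + bx.
\]
Solving $F_y = 3y^2 - P(x) = 0$ on the critical set gives $y = \epsilon \sqrt{P(x)/3}$ with $\epsilon = \pm 1$, and substituting into $F = 0$ and $F_x = 0$ then produces two Puiseux-parameterized branches
\[
b_\epsilon(x) \;=\; \epsilon \sqrt{P/3}\,P'(x) - Q'(x),\qquad
a_\epsilon(x) \;=\; \epsilon \sqrt{P/3}\,\bigl(\tfrac{2}{3}P - xP'\bigr) - Q(x) + xQ'(x),
\]
whose union is the generic discriminant curve $H_0\cap D$ in the $(a,b)$-plane.

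The next step is to extract the topological type from these parameterizations. The leading orders of $b_\epsilon$ and $a_\epsilon$ --- which determine the Newton polygon of each branch --- are controlled by $\ord P$ and $\ord Q'$, whose minimum is $\sigma$ by Lemma \ref{sigma}. The interaction of the two branches is controlled by the product
\[
b_{+}(x)\,b_{-}(x) \;=\; Q'(x)^2 - \tfrac{1}{3}P(x)P'(x)^2 \;=\; \tfrac{1}{3}\bigl(3Q'^2 - P P'^2\bigr),
\]
whose order equals $\mu$ by Lemma \ref{sigma}; this number governs both the intersection multiplicity of the two branches at the origin and the locations of the cusps, which occur where $a_\epsilon'$ and $b_\epsilon'$ vanish simultaneously.

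Together the Puiseux data of the individual branches (through $\sigma$) and their interaction data (through $\mu$) determine the Puiseux pairs, the intersection numbers, and the cuspidal structure, and hence the topological type of the curve $H_0 \cap D$. The hard part will be to verify that no finer analytic information survives --- that is, that after an ambient homeomorphism of the $(a,b)$-plane all residual coefficients appearing in the Puiseux expansions can be absorbed, leaving only the combinatorial data of $(\mu,\sigma)$. This is precisely what the explicit calculations in the next section will accomplish.
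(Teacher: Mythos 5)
There is a genuine gap --- in fact two. First, the plane section you compute is not shown to be the \emph{generic} one, and in the most important cases it demonstrably is not. You perturb only by $a+bx$, i.e.\ you intersect the discriminant with the coordinate plane spanned by the unfolding directions $1$ and $x$, leaving $P$ unperturbed. For the Brieskorn--Pham germs $y^3+x^{\nu+1}$ one has $P\equiv 0$, so $\sqrt{P/3}\equiv 0$ and your two branches $(a_+,b_+)$ and $(a_-,b_-)$ coincide: the section you compute is a single (non-reduced) cuspidal branch, which cannot be the generic discriminant curve. By definition the generic discriminant curve is the section by a \emph{generic} plane through the origin, and Pham's computation uses a generic perturbation direction $p(x)y+q(x)$; the entire point of Pham's result (and of Section~3 of the paper, which deliberately chooses the non-generic direction $p=x$, $q=0$) is that special directions can change the topology, so genericity of the chosen plane cannot be assumed silently --- it is exactly where the subtlety lies.

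Second, even granting a correct parameterization, your argument stops at the assertion that ``the Puiseux pairs, the intersection numbers, and the cuspidal structure'' are determined by $\sigma$ and $\mu$, and you explicitly defer the verification (``the hard part \dots is precisely what the explicit calculations in the next section will accomplish''). But that verification \emph{is} the proposition: one must carry out the case analysis (which of $\ord P$, $\ord Q'$ realizes $\sigma$, parity/divisibility conditions governing how the $\pm\sqrt{P}$ sheets assemble into branches, equality versus strict inequality between $3\sigma-2$ and $2\nu$, etc.) and show that no further coefficients or exponents of the expansions affect the embedded topological type. Leading orders of $a_\epsilon$, $b_\epsilon$ and the order $\mu$ of $b_+b_-$ do not by themselves pin down the characteristic exponents and the mutual intersection multiplicity. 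Note that the paper itself does not prove this statement: it is quoted as a distilled consequence of Pham's complete topological classification in \cite{PhamCargese}. So a self-contained proof would have to reproduce Pham's classification; your sketch sets up the right parameterization but does not do this, and in addition starts from a section plane that is non-generic precisely in the Brieskorn--Pham cases the paper cares about.
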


\begin{cor}
\label{pham_cor}
The topological type of the unfolded generic discriminant curve
only depends on the topological invariant $\mu$ and the analytic
invariant $\sigma$.
\end{cor}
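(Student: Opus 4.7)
The plan is to reduce the corollary to Proposition~\ref{pham} via the relationship between the generic discriminant curve $H_0\cap D$ and its unfolded counterpart $H\cap D$. This relationship is exactly the one already invoked in the introduction, where the author asserts that the topological type of the former is constant along the $\mu$-constant stratum if and only if the topological type of the latter is. Since the corollary is logically of the same form as this constancy statement (replacing ``constant along the $\mu$-constant stratum'' with ``depends only on $\mu$ and $\sigma$''), the same mechanism should apply.

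Concretely, I would regard the pencil of parallel planes as a one-parameter deformation. Choose a linear form $\pi:S\to\CC$ whose kernel is $H_0$; then $H_t:=\pi\inv(t)$ is a family of planes with $H_0$ passing through the origin and with $H_t=H$ for one small nonzero value of $t$. The intersections $H_t\cap D$ form a one-parameter family of plane curves, whose central fiber is the germ $(H_0\cap D,0)$ and whose nearby fibers are smoothings of that germ inside the plane. By the equisingular deformation theory of plane curve singularities, the embedded topological type of such a generic plane-section smoothing, considered in a small neighbourhood of the origin, is completely determined by the topological type of the central fiber.

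Combining these two ingredients gives the corollary. Proposition~\ref{pham} asserts that the topological type of $H_0\cap D$ depends only on $\mu$ and $\sigma$, and the equisingular deformation principle then upgrades this to the same conclusion for $H\cap D$.

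The main point that requires care is verifying that the family $\{H_t\cap D\}$ really is of the equisingular type to which the deformation principle applies, i.e.\ that for generic choices of $H_0$ and of the displacement direction no extra singularities of $D$ are picked up on $H_t$ outside a small ball around the origin, and that the total space germ $(D\cap\pi\inv(\Delta),0)$ is topologically trivial over a punctured disc. Both properties are consequences of the genericity built into the definition of the generic discriminant curve together with the fact that $D$ has an isolated singular point at the origin of the parallel pencil, so no novel technical difficulty intervenes between Proposition~\ref{pham} and Corollary~\ref{pham_cor}.
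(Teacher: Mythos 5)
There is a genuine gap at the central step of your argument. You describe the nearby parallel sections $H_t\cap D$ as ``smoothings'' of the germ $(H_0\cap D,0)$ and then invoke an equisingularity principle saying that the embedded topological type of a generic plane-section smoothing is determined by the topological type of the central fibre. But the nearby sections are not smoothings: the discriminant $D$ is itself singular along the cuspidal and nodal loci, so $H\cap D$ is a singular curve with nodes and cusps (this is the whole point of Pham's analysis and of the paper). For a deformation whose nearby fibre is singular there is no general principle that its topology is determined by the central fibre alone; it depends on the geometry of the pair $(D,\text{pencil})$ near $0$. Concretely, the topological type of $H_0\cap D$ only pins down $\tilde\mu$, and via the relation $\mu+\tilde\mu-1=3(\#\text{cusps})+2(\#\text{nodes})$ this determines a weighted sum, not the individual numbers of cusps and nodes of $H\cap D$. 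That the number of cusps does not jump is exactly the delicate point --- indeed the paper cites Greuel's theorem on constancy of the cusp number as a nontrivial result in a different range of cases --- so it cannot be absorbed into a black-box appeal to ``equisingular deformation theory.'' Your argument also conflates a pointwise determination (central fibre determines nearby fibre) with a family statement (an equisingular family of central fibres yields a topologically trivial family of nearby sections); to compare two different singularities $f_1,f_2$ with the same $(\mu,\sigma)$ you would need the former, which is precisely what is unproved.

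The paper closes this gap differently: from Proposition~\ref{pham} it gets that $\tilde\mu$ depends only on $(\mu,\sigma)$, hence $3(\#\text{cusps})+2(\#\text{nodes})=\mu+\tilde\mu-1$ is constant on the set where $\mu$ and $\sigma$ are constant; then upper semicontinuity of the two counts together with connectedness of that set forces each count, and with it the topological type of the unfolded generic discriminant curve, to be constant. Some argument of this numerical/semicontinuity type (or an actual proof of the determination you assert) is needed; as written, your reduction to Proposition~\ref{pham} does not go through.
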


\begin{proof}
The topological type of the generic discriminant curve determines its
Milnor number $\tilde \mu$.
The number $\mu+\tilde\mu-1$ is the sum
of three times the number of cusps and twice the number of nodes 
of any corresponding unfolded discriminant curve, cf. \cite{PhamCargese}.
Since both cardinalities are upper semi-continuous
and the set with constant $\sigma$ and $\mu$ is connected, they are both
constant along this set,
and so is the topological type of the unfolded generic discriminant
curve.
\end{proof}

\begin{prop}
\label{nonBP}
If $f$ is a plane curve singularity of multiplicity $3$ and 
\[
f \quad \not\sim_{top} \quad y^3 + x^{\nu+1} \qquad\mbox{for all } \nu
\]
then the discriminant knot group is constant along the $\mu$-constant
stratum.
\end{prop}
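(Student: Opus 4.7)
The plan is to carry out Brieskorn's original strategy as outlined in the introduction. By the local Zariski-Lefschetz theorem of L\^e--Hamm the discriminant knot group is isomorphic to the fundamental group of the complement of the unfolded generic discriminant curve in a plane, and is thus determined by the topological embedding type of that curve. Corollary~\ref{pham_cor} identifies this embedding type as a function of the two invariants $\mu$ and $\sigma$ alone. Since $\mu$ is constant by hypothesis, it remains to show that the analytic invariant $\sigma$ is likewise constant along the $\mu$-constant stratum under the assumption that $f\not\sim_{top} y^3 + x^{\nu+1}$ for all $\nu$.

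I would exploit the fact that for plane curve singularities the $\mu$-constant stratum consists of singularities of a single topological type, so that the non-Brieskorn--Pham hypothesis transports to every member $y^3 - P(x)y + Q(x)$ of the stratum and forces $P\not\equiv 0$ throughout. With both $p := \ord P$ and $q := \ord Q'$ then finite, Lemma~\ref{sigma} reduces the proposition to the following claim: if $P\not\equiv 0$ and $\mu = \ord(3Q'^2 - PP'^2)$ is held constant in a family, then $\min\{p,q\}$ cannot jump. To verify this I would split into the three cases $2q<3p-2$, $2q>3p-2$, and $2q=3p-2$. In the first two cases the leading term of $3Q'^2 - PP'^2$ pins down one of $p,q$ directly from $\mu$, while the other is pinned by the Puiseux characteristic exponents of $f$, which are topological invariants.

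The main obstacle is the borderline case $2q = 3p-2$, in which the leading contributions of $3Q'^2$ and $PP'^2$ may cancel and the naive order estimate collapses. Here I expect the non-Brieskorn--Pham hypothesis to be precisely what rules out the dangerous cancellation: a family along which the leading terms cancelled persistently while preserving topology could be brought, by a Tschirnhaus substitution in $y$, into the form $y^3 + \widetilde Q(x)$, contradicting the assumption. Extracting this explicitly from Pham's refined classification in \cite{PhamCargese} --- or reproving it directly by a Puiseux expansion of $f$ --- would show that $\sigma$ is topologically determined outside the Brieskorn--Pham locus and thereby complete the proof.
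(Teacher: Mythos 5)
Your overall reduction is the same as the paper's: granting that both $\mu$ and $\sigma$ are constant along the stratum, Corollary~\ref{pham_cor} plus the L\^e--Hamm theorem finishes the argument exactly as in the paper. The gap is that the constancy of $\sigma$, which is the entire content of the proposition, is not established by your case analysis. In the cases $2q<3p-2$ and $2q>3p-2$ you assert that the order not pinned down by $\mu$ is ``pinned by the Puiseux characteristic exponents''; this is precisely the statement that needs proof, it is false without the non--Brieskorn--Pham hypothesis (Proposition~\ref{exist} exhibits $\mu$-constant deformations of $y^3+x^{\nu+1}$ with $2\nu\le 3m-2$ along which $\sigma=\min\{p,q\}$ jumps from $\nu$ to $m$), and yet you only invoke that hypothesis in the borderline case. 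So the non-borderline cases are not actually closed.

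Worse, in the borderline case $2q=3p-2$ the mechanism you propose is wrong. Leading-term cancellation in $3Q'^2-PP'^2$ does occur for non--Brieskorn--Pham germs: the whole series $J_{k,i}$, which after the Tschirnhaus substitution reads $y^3-\frac13 yx^{2k}+\frac2{27}x^{3k}+a(x)x^{3k+i}$, has $p=2k$, $q=3k-1$, hence $2q=3p-2$, and the leading coefficients of $3Q'^2$ and $PP'^2$ are both $\frac{4k^2}{27}$, so they cancel (consistently with $\mu=6k+i-2>6k-2$). Thus ``persistent cancellation forces the form $y^3+\widetilde Q(x)$'' is false; a Tschirnhaus substitution removes the $y^2$-term but cannot remove the $y$-term, and these germs are exactly the non-simple, non--Brieskorn--Pham ones your proposition must handle. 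The paper avoids all of this by quoting Arnol'd's classification: a multiplicity-$3$ germ not topologically of type $y^3+x^{\nu+1}$ is simple, of type $J_{k,i}$ ($k\ge2$, $i>0$), or of type $E_{6k+1}$ ($k\ge2$), and on the normal forms Lemma~\ref{sigma} gives $\sigma=2k$, respectively $\sigma=2k+1$, independently of the modulus $a(x)$, hence constant on the stratum. If you want to bypass the classification, you must prove directly that for non--Brieskorn--Pham germs the exponent $p=\ord P$ (equivalently the relevant Newton polygon datum) is a topological invariant; as it stands your sketch neither proves this nor correctly isolates where the non--Brieskorn--Pham hypothesis enters.
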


\begin{proof}
According to the classification by Arnol'd \cite{ArnoldInvent} $f$ is simple, 
of type $J_{k,i}$, $k\geq2,i>0$, or of type $E_{6k+1}$, $k\geq2$.
In the simple case the claim is trivially true as was remarked before. 

In case of $f\in J_{k,i}$, $k\geq2,i>0$ Arnol'd has given a normal form which
by an analytic equivalence -- more precisely  
by a Tschirnhaus transformation -- can be put in the form considered by Pham.
\begin{eqnarray*}
 & & y^3 + y^2x^k + a(x) x^{3k+i},\quad \ord a = 0 \\
 & \sim_{an} & y^3 -\frac13 yx^{2k} + \frac2{27} x^{3k} + a(x) x^{3k+i}
\end{eqnarray*}
According to the lemma $\sigma=2k$ and thus $\sigma$ is independent of $a(x)$.

In case of $f\in E_{6k+1}$ the normal form of Arnol'd is in the form
considered by Pham, so from
\[ 
y^3 +yx^{2k+1}+ a(x) x^{3k+2}
\]
$\sigma=2k+1$ independent of $a(x)$ is immediate by the lemma again.

In both cases we conclude with the corollary that the topological type
of the unfolded generic discriminant is constant along the $\mu$-constant
stratum.
Therefore the fundamental groups of their complements also do not change.
The local Zariski theorem on hyperplane sections \cite{hl} identifies 
these groups with the discriminant knot groups which are thus shown to be 
constant along the $\mu$-constant stratum. 
\end{proof}

%%%%%%%%%%%%%%%%%%%%%%%%%%%%%%%%
\section{Existence of suitable non-generic discriminant curves}

In this section we follow the path traced by Pham to obtain a non-generic
reduced discriminant curve which does not change its topological type
under a small deformation along the $\mu$-constant stratum,
although the analytic invariant $\sigma$ changes.

In fact, as the last section will prove, it will suffice to do so for the
Brieskorn Pham polynomials.
\medskip

We recall from \cite{PhamCargese} the construction of the discriminant 
curve in direction of a linear perturbation by a polynomial $p(x)y+q(x)$.
The critical set of the unfolding of 
\[
f\quad=\quad f(x,y)\quad = \quad y^3 - P_0(x)y + Q_0(x)
\]
by $-u + t (p(x)y+q(x))$ is
a curve in $4$-space and the corresponding discriminant curve
is obtained by projection along the coordinates $x,y$,
algebraically by elimination of $x,y$ from
\begin{eqnarray}
\label{function}
u & = & \:y^3 - P y + Q \\
\label{delfx}
0 & = & \quad\:\:- P'y + Q' %\quad= \quad f'_x \\
\\ \label{delfy}
0& = & 3y^2 - P %\quad= \quad f'_y
\end{eqnarray}
where $P=P_0+tp$, $Q=Q_0+tq$.

But as Pham does, we take the detour by the projection along $u$ and $y$
which is easier to obtain.
The parameter $u$ is eliminated by the sole use of \eqref{function}
and from  \eqref{delfx} and \eqref{delfy} we can eliminate $y$ to get
\begin{equation}
\label{xtplane}
3 Q'^2 -PP'^2 \quad =\quad 0
\end{equation}
First we consider for an additional parameter $s=0$ or sufficiently small the case
\[
Q \quad = \quad Q_0 \quad=\quad x^{\nu+1},
\qquad q=0,
\qquad
P \quad = \quad P_0 + tp \quad = \quad sx^\sigma +tx.
\]

The first step according to Pham is to compute the branches $x(t)$.
Recall the expansion of \eqref{xtplane} in terms of the variable $t$ according to
\[
3 (Q'_0+tq')^2 -(P_0+tp)(P'_0+tp')^2 \quad=\quad 
A_0 + A_1 t + A_2 t^2 + A_3 t^3
\]
In the current situation we get the following vanishing orders of the $A_i$
under the assumption of $s$ sufficiently small.

\begin{center}
\begin{tabular}{lclclcc}
&&expansion & & &$\quad$& $\quad$vanishing order 
\\[3mm]
$A_0$ & = & $3 Q'^2_0 -P_0P'^2_0$
& = & $3(\nu+1)^2x^{2\nu}-s^3\sigma^2x^{3\sigma-2}$
&& $\mu = \min\{2\nu, 3\sigma - 2\} $ 
\\[2mm]
$A_1$ &=& $-xP'^2_0 - 2P_0P'_0$
& = & $-s^2\sigma(\sigma+2)x^{2\sigma-1}$
&& $2\sigma -1$ for $s\neq0$
\\[2mm]
$A_2$ &=& $- P_0 -2 x P'_0$
& = & $-s(2\sigma+1)x^\sigma$
&& $\sigma $ for $s\neq0$
\\[2mm]
$A_3$ &=& $-x $
&&& & $1$
\end{tabular}
\end{center}
\medskip

%\begin{center}
%\begin{tabular}{lclcl}
%$\quad$expansion &$\quad$& $\quad$vanishing order 
%& & expansion\\[3mm]
%$A_0 = 3 Q'^2_0 -P_0P'^2_0$
%&& $\mu = \min\{2\nu, 3\sigma - 2\} $ 
%& & $3(\nu+1)^2x^{2\nu}-s^3\sigma^2x^{3\sigma-2}$\\[2mm]
%$A_1 = -xP'^2_0 - 2P_0P'_0$
%&& $2\sigma -1$ for $s\neq0$
%& & $-s^2\sigma(\sigma+2)x^{2\sigma-1}$\\[2mm]
%$A_2 = - P_0 -2 x P'_0$
%&& $\sigma $ for $s\neq0$
%& & $-s(2\sigma+1)x^\sigma$\\[2mm]
%$A_3 = -x $
%&& $1$
%\end{tabular}
%\end{center}
%\medskip

Under the assumption $3\sigma-2\geq 2\nu$ the Newton Polygon looks as below
depending on whether equality holds or not.
( The $\circ$ are only present for $s\neq0$. )
\medskip

\setlength{\unitlength}{7mm}
\begin{picture}(14,6)
\put(2,0){
\begin{picture}(10,6)
\put(2.2,0){\line(0,1){6}}
\put(2,.5){\line(1,0){5}}

\put(2,1){\line(1,0){.4}}
\put(1.4,.8){$1$}
\put(2,2.5){\line(1,0){.4}}
\put(1.4,2.3){$\sigma$}
\put(2,4){\line(1,0){.4}}
\put(0,3.8){$2\sigma-1$}
\put(2,5.5){\line(1,0){.4}}
\put(0,5.3){$3\sigma-2$}
\put(0.5,4.65){$= 2\nu$}

\put(3.4,.3){\line(0,1){.4}}
\put(3.25,-.3){$1$}
\put(4.6,.3){\line(0,1){.4}}
\put(4.45,-.3){$2$}
\put(5.8,.3){\line(0,1){.4}}
\put(5.65,-.3){$3$}

\put(2.2,5.5){\line(4,-5){3.6}}
\put(2.2,5.5){\circle*{.2}}
\put(3.4,4){\circle{.2}}
\put(4.6,2.5){\circle{.2}}
\put(5.8,1){\circle*{.2}}

\end{picture}
}

\put(10,0){
\begin{picture}(10,6)
\put(2.2,0){\line(0,1){6}}
\put(2,.5){\line(1,0){5}}

\put(2,1){\line(1,0){.4}}
\put(1.4,.8){$1$}
\put(2,2.5){\line(1,0){.4}}
\put(1.4,2.3){$\sigma$}
\put(2,4){\line(1,0){.4}}
\put(0,3.8){$2\sigma-1$}
\put(2,5.5){\line(1,0){.4}}
\put(0,5.3){$3\sigma-2$}
\put(1.1,4.5){$2\nu$}

\put(3.4,.3){\line(0,1){.4}}
\put(3.25,-.3){$1$}
\put(4.6,.3){\line(0,1){.4}}
\put(4.45,-.3){$2$}
\put(5.8,.3){\line(0,1){.4}}
\put(5.65,-.3){$3$}

\put(5.8,1){\line(-1,1){3.6}}
\put(2.2,5.5){\circle{.2}}
\put(3.4,4){\circle{.2}}
\put(4.6,2.5){\circle{.2}}
\put(5.8,1){\circle*{.2}}
\put(2.2,4.6){\circle*{.2}}

\end{picture}
}

\end{picture}
\medskip

The leading term corresponding to the compact face has no multiple root.
This is obvious in case of $3\sigma-2> 2\nu$ and for $s=0$, therefore
it is true also for $s$ sufficiently small.

In particular, for $s$ sufficiently small, the number of branches is
constant and the leading term of each branch has non-vanishing
coefficient which varies continuously with $s$.
\medskip

We consider now the case $s=0$ in detail ( but claims hold true also
for $s$ small up to continuous changes of the coefficients ) and distinguish
the following cases
\begin{enumerate}
\item[$(a)$] $\gcd(2\nu-1,3) = \gcd(2\nu+2,3) = \gcd(\nu+1,3)=1$
\item[$(b)$] $\gcd(2\nu-1,3) =  \gcd(\nu+1,3)=3$, \\
%$\gcd(3\nu+3,2\nu-1) = \gcd(2\nu-1,\nu+4) = \gcd(\nu+4,\nu-5) 
%= \gcd(\nu+4,9) = 3$ \\
$\gcd(3\nu+3,2\nu-1) = \gcd(6\nu+6,2\nu-1) = \gcd(2\nu-1,9) = 3$ 
\item[$(c)$] $\gcd(2\nu-1,3) =  \gcd(\nu+1,3)=3$, 
$\gcd(3\nu+3,2\nu-1) = \gcd(2\nu-1,9) = 9$ 
\end{enumerate}
In case $(a)$ there are two branches
\begin{equation}
\label{abranches}
x(t) = 0, \quad x(t) = c_0 t^\frac3{2\nu-1} + h.o.t.
\end{equation}
in cases $(b)$ and $(c)$ there are four branches
\begin{equation}
\label{bcbranches}
x(t) = 0, \quad 
x(t) = c_0 \omega^i t^\frac3{2\nu-1} + h.o.t.,\quad
 i=0,1,2.
\end{equation}
where $c_0\neq 0$ is a numerical constant and $\omega$ a primitive
root of unity of order $2\nu-1$.
%\footnote{From here we want to argue, that all branches of the discriminant curve have a single Puiseux pair, and there is no excess intersection between different branches. The aim is to conclude, that the topological type of the discriminant curve only depends on $\nu$ but not at all on $\sigma$.}

To pursue along the lines of \cite{PhamCargese} we check 
first that the hypothesis
\[
P'(x(t),t) \quad = \quad P'_0(x(t)) + tp'(x(t))
\quad = \quad P'_0(x(t)) + t
\quad \neq \quad 0
\quad \in\quad \CC\{ t^{\frac1{2\nu-1}}\}
\]
holds true for every possible branch $x(t)$.

Therefore the following formula derived by Pham is valid in the field
of fractions $\CC(( t^{\frac1{2\nu-1}}))$.
\begin{eqnarray}
%y & = & \frac{Q'}{P'} \\
%y^2 & = & \frac13 P \\
\label{uformel}
u & = & -\frac23 \frac{P}{P'} Q' + Q \\
& = & \big(-\frac23 (\nu+1) + 1\big)\: x(t)^{\nu+1} \notag
\end{eqnarray}

In case $(a)$ we plug in the expansions \eqref{abranches} to get
\[
u(t) = 0, \quad
u(t) = \big(-\frac23 (\nu+1) + 1\big)c_0^{\nu+1}t^\frac{3\nu+3}{2\nu-1}
+ h.o.t.
\]
The corresponding branches are reduced and not equal. Moreover
the second expansion does not have further essential summands, 
since the exponent of $t$ is in its reduced form and has the maximal
possible denominator.

In case $(b)$ we write $2\nu-1=3e$ with $e$ coprime to $3$
and get the expansions
\[
u(t) = 0, \quad
u(t) = \big(-\frac23 (\nu+1) + 1\big)c_0^{\nu+1} \omega^{(\nu+1)i}
t^\frac{3\nu+3}{2\nu-1}
+ h.o.t., \quad  i = 0,1,2
\]
Again the corresponding branches are reduced and pairwise not
equal.
This time the reduced form of the exponent has denominator $e$.
Again this is the maximal possible denominator, since the $u$-degree
of the Weierstrass polynomial of the first branch is $1$ and of the 
other three branches is the maximal denominator, but their sum
is equal to the Milnor number which is $\mu=3e + 1$.

Thus in case $(a)$ and case $(b)$ we have found a perturbation
such that the topological type of corresponding discriminant curve
does not vary for $s$ sufficiently small.

In case $(c)$ we write $\nu-5=9\rho$, but we fail to argue
as above. In fact, for $s=0$ we get expansions which parametrize
the branches of the corresponding discriminant curve by a $3:1$ map
so this curve is non-reduced.

Hence we rerun the method of Pham with the modified perturbation
\[
t( xy + x^{3\rho+4}),\quad i.e.\quad p = x,\: q = x^{3\rho+4}
\]
The essential expansion of $x$ in terms of $t$ remains
the same as before, since the new perturbation only adds
the points $(1, 12\rho+8)$ and $(2,6\rho+6)$
to the support, which both lie above the Newton polygon.
\[
x(t) = 0, \quad 
x(t) = c_0 \omega^i t^\frac3{2\nu-1} + h.o.t.,\quad
 i=0,1,2
\]
The reduced form of the exponent is the inverse of $6\rho+3$.

The formula \eqref{uformel} now gives (using $c_\nu,c_\rho$
for the obvious constants)
\begin{eqnarray*}
u & = & 
\big(-\frac23 (\nu+1) + 1\big)\: x(t)^{\nu+1}  +
\big(-\frac23 (3\rho+4) + 1\big)\: tx(t)^{3\rho+4} 
\\
& = & 
c_\nu c_0^{\nu+1}\omega^{(\nu+1)i}t^\frac{\nu+1}{6\rho+3}  +
c_\rho c_0^{3\rho+4}\omega^{(3\rho+4)i}t^\frac{\nu+2}{6\rho+3}
%=\frac{9\rho+5+2}{6\rho+3} =  {1+\frac{3\rho+4}{6\rho+3}}  
+ h.o.t.
\end{eqnarray*}
%\begin{eqnarray*}
%u & \sim &
%t^\frac{9\rho+6}{6\rho+3} + t^{1+\frac{3\rho+4}{6\rho+3}}
%\\
%& = & t^\frac{3\rho+2}{2\rho+1} + t^\frac{9\rho+7}{6\rho+3}
%\end{eqnarray*}
We can now argue as before, that $6\rho+3$ is the maximal
possible denominator. Therefore no further essential
summand occurs, and we get reduced, pairwise distinct branches
also in the remaining case $(c)$.

Let us summarize the results of the present section as follows.

\begin{prop}
\label{exist}
Suppose $f=y^3 + x^{\nu+1}$ and $m$ is
an integer with 
\[
2\nu\leq 3m -2 , \quad m\leq \nu
\]
then there exists a $3$-parameter unfolding $F(x,y;u,t,s)$,
such that
\begin{enumerate}
\item
along $u=t=0$ the unfolding is $\mu$-constant
\item
for fix $s$ sufficiently small, the discriminant curve 
of the unfolding $F_{s}$ by the parameter $t$ is 
reduced and topologically equivalent to that of $F_{0}$.
\item
the analytic invariant $\sigma$ is $\nu$ for $s=0$ and
$m$ for $s\neq0$ sufficiently small.
\end{enumerate}
\end{prop}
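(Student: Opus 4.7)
The plan is to construct the unfolding explicitly and then read off the three properties from the analysis carried out in the present section. Set
\[
F(x,y;u,t,s) \;=\; y^3 - P(x)\,y + Q(x) - u, \quad P = sx^m + tx, \quad Q = x^{\nu+1} + t\,q(x),
\]
with $q(x) = 0$ in cases $(a)$ and $(b)$, and $q(x) = x^{3\rho+4}$ in case $(c)$, where $\rho$ is the integer with $\nu = 9\rho + 5$. The variables $u, t, s$ furnish the desired three-parameter unfolding of $f$.

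Properties (1) and (3) follow by direct application of Lemma~\ref{sigma} to the restriction $f_s = y^3 - sx^m y + x^{\nu+1}$ of $F$ to $u = t = 0$. For (1), the formula there yields
\[
\mu \;=\; \ord\!\bigl(3(\nu+1)^2 x^{2\nu} - s^3 m^2 x^{3m-2}\bigr),
\]
which equals $2\nu$ uniformly in small $s$ thanks to the hypothesis $2\nu \leq 3m - 2$, so the line $u = t = 0$ is $\mu$-constant. For (3), at $s = 0$ one has $P \equiv 0$ and hence $\sigma = \ord Q' = \nu$, while for $s \neq 0$ small, $\ord P = m \leq \nu = \ord Q'$ gives $\sigma = m$.

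Property (2) is the substantive point and is obtained by importing the Newton polygon calculation performed above. That calculation, done case by case for $(a)$, $(b)$, $(c)$, shows that for $s$ small the $x(t)$-branches of $3Q'^2 - PP'^2 = 0$ remain constant in number with continuously varying nonzero leading coefficients, and that the resulting $u(t)$-expansions obtained from formula~\eqref{uformel} are reduced, pairwise distinct, and carry a Puiseux exponent whose denominator already attains the maximum allowed by the Milnor number of the generic section. Consequently no further essential summand can intervene, and the topological type of each branch together with the pattern of intersections is determined by these leading exponents alone, independently of $s$.

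The main obstacle is case $(c)$: with the naive choice $q = 0$ the three nontrivial branches carry coincident $u(t)$-parametrisations at $s = 0$, so the discriminant curve fails to be reduced. The remedy is the perturbation $q = x^{3\rho+4}$; the key verification is that the new monomials contribute Newton-support points lying strictly above the polygon governing $x(t)$, so the leading behaviour of the $x$-branches is unchanged, while the added term in \eqref{uformel} separates the three $u$-branches and forces the maximal denominator $6\rho + 3$ in the Puiseux exponent. With this adjustment the argument of cases $(a)$ and $(b)$ carries over and delivers the required constancy of the topological type.
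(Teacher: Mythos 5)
Your proposal is correct and takes essentially the same route as the paper: there the proposition is explicitly a summary of the preceding section's computation, and your unfolding $F$ with $P=sx^m+tx$, $Q=x^{\nu+1}+t\,q(x)$ (with $q=x^{3\rho+4}$ only in case $(c)$, $\nu=9\rho+5$) together with Lemma~\ref{sigma} for properties (1) and (3) and the Newton polygon / Puiseux-expansion analysis via \eqref{uformel} for property (2) is precisely the paper's argument. The only difference is cosmetic: you write $m$ where the section uses $\sigma$ for the exponent in $P$, and you assemble the three verifications in one place rather than leaving them distributed through the section.
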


%%%%%%%%%%%%%%%%%%%%%%%%%%%%%%%%
\section{The Zariski theorem}

In this final section we have to revisit the local Zariski and van
Kampen theorem which avoids the use of generic hyperplane
sections, cf.\ the more extended exposition in \cite{IJM,L11}.

%Suppose $\hfami\subset \CC^k,0$ is a reduced hypersurface germ
%and suppose $p:\CC^k,0\to \CC^{k-1},0$ is a linear projection
%for which $\hfami$ is horizontal, i.e. $p^{-1}(0)$ is not contained in $\hfami$.
%
%Then the fundamental group of the hypersurface complement
%is generated by the restriction to a fibre
%and it is finitely presented by the result of Zariski and van Kampen.
%\medskip

%In our concrete case we are primarily interested in the discriminant
Our main interest lies in the discriminant
complement, so let us recall the basic setting: Given a holomorphic
function germ $f=f(x,y)$ on the germ $\CC^2,0$ of the affine plane with
coordinates $x,y$, we consider a versal unfolding, which can be given by
a function germ on the affine space germ $\CC^2,0\times\CC,0\times\CC^k,0$
\[
F(x,y,u,v) \quad = \quad f(x,y) - u + \sum_i^k v_i g_i(x,y)
\]
where the $g_i$ generate additively the local ideal of function germs
on $\CC^2,0$ vanishing at the origin up to elements in the Jacobian 
ideal of $f$. They are typically taken to be non-constant monomials.

We get a diagram
$$
\begin{array}{rcccccl}
(u,v_1,...,v_{k}) & \in & \CC^{k+1},0 & \supset & \dfami & = & \{(u,v)
\,|\, F\inv_{u,v}(0)\text{ is singular }
\}\\[2mm]
\big\downarrow\hspace*{10mm} & & \raisebox{1mm}{$p$}\big\downarrow \phantom{pp} && 
\\[2mm]
(v_1,...,v_{k}) & \in & \CC^{k},0 & \supset & \bfami & = & \{u
\,|\,F_{0,v} \text{ is not Morse }\}
\end{array}
$$
The restriction $p$\raisebox{-.7mm}{$|_\dfami$}
of the projection to the \emph{discriminant} $\dfami$ is
a finite map, such that the branch set coincides
with the \emph{bifurcation set} $\bfami$
and the critical points are contained in the pre-image $\tilde\bfami=p\inv(\bfami)$.
%(Called the bifurcation set of functions in \cite{aglv}, while
%$\tilde\bfami=p\inv(\bfami)$ is called the bifurcation set in \cite{loo}.)
In particular, the origin is an isolated point in the intersection of
$\dfami$ with the fibre $p^{-1}(0)$.
If a hypersurface germ has this property we call it \emph{horizontal}
for the projection $p$.

The key observation %for the present work 
is, that a suitable representative of the complement
of $\tilde\bfami$ is a trivial disc bundle by $p$ into which
$\dfami$ is embedded as a smooth submanifold, which is 
a connected topological cover by $p$.
This situation, which can be treated also in the language of
polynomial covers, cf.\ \cite{hansen},
%referred to by Hansen \cite{hansen} as a polynomial
%covering, gives 
naturally gives rise to a \emph{braid monodromy homomorphism}:
%which is the structure map of the bundle
The domain is the fundamental group of the complement of
$\bfami$, its target is the group of mapping classes of the punctured fibre,
the image is called the \emph{braid monodromy group}.

It coincides with the map of fundamental groups
induced by the map of Lyashko Looijenga under
the natural identification of the mapping class group with the
fundamental group of the space of monic simple
univariate polynomials of degree $n$ at the corresponding base point:
\begin{eqnarray*}
\CC^{k} - \bfami & \tto & \CC[x],
\\
v & \mapsto & p_v
\end{eqnarray*}
which maps to monic univariate polynomials 
of degree $\mu$ with simple roots only and at the 
corresponding points of $\dfami$.

%As in the case of plane curves the method of van Kampen
%\cite{vK} leads to an explicit
%presentation of the fundamental group of the dis\-cri\-mi\-nant complement
%$\CC^\mu\smin\dfami$ in terms of generators and relations.

To use the braid monodromy group on the fundamental group
of the dis\-cri\-mi\-nant complement we employ the argument 
of Zariski and van Kampen \cite{vK}. It relies on a choice
of a geometric basis in the fibre over the base point which is the
customary tool to identify the action of the group of isotopy classes
of diffeomorphisms on the fundamental group of the fibre with
the right Artin action of the abstract braid group on the free generators
$t_1,\dots,t_n$ given by 
\[
(t_j)\sigma_j=t_jt_{j+1}t_j^{-1},\quad
(t_{j+1})\sigma_j = t_j,\quad
(t_i)\sigma_j= t_i, \mbox{if } i\neq j, j+1
\]

\begin{thm}[van Kampen]
Given a horizontal hypersurface germ with braid monodromy group
generated by braids $\{\beta_s\}$ in $Br_n$.
Then the local fundamental group of the complement is
finitely presented as
\[
\pi_1 =
\langle t_1,\dots,t_n\,|\, t_i^{-1} t_i^{\beta_s}, 1\leq i\leq n, \mbox{all }
\beta_s \rangle
\]
\end{thm}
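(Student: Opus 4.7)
My plan is to deduce the presentation from the Seifert--van Kampen theorem applied to the natural fibration with singular fibres that was set up in the paragraphs leading to the statement. First, I restrict to a small ball $U$ about $0 \in \CC^k$ and set $S = p\inv(U)$, $E = p\inv(U\smin\bfami)$. By horizontality of $\dfami$ and the observation quoted in the text, $p$ turns $E$ into a trivial $D^2$-bundle over $U\smin\bfami$ in which $\dfami\cap E$ sits as a smooth $n$-sheeted topological cover; removing it, $E\smin\dfami \tto U\smin\bfami$ becomes a locally trivial fibration whose fibre $F$ is a $2$-disc with $n$ punctures. Picking a geometric basis of meridians in $F$ identifies $\pi_1(F)$ with the free group on $t_1,\dots,t_n$, and by construction the action of $\pi_1(U\smin\bfami)$ on $\pi_1(F)$ is exactly the braid monodromy, whose image is generated by the $\beta_s$.

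The next step is to compute $\pi_1(E\smin\dfami)$. The fibration admits a section, obtained by choosing the base point in $F$ away from the punctures and transporting it locally trivially, so the homotopy long exact sequence collapses into a split short exact sequence
\[
1 \tto F_n \tto \pi_1(E\smin\dfami) \tto \pi_1(U\smin\bfami) \tto 1,
\]
and yields a semi-direct product structure in which the conjugation action of a lift of $\gamma \in \pi_1(U\smin\bfami)$ on $t_i$ is $\gamma\, t_i\, \gamma\inv = t_i^{\gamma}$, with $\gamma$ acting via the braid monodromy.

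The third step is to pass from $\pi_1(E\smin\dfami)$ to $\pi_1(S\smin\dfami)$ by adding back the regular parts of the fibres over $\bfami$. For each generator $\beta_s$ of the braid monodromy group, the corresponding meridian loop $\gamma_s \in \pi_1(U\smin\bfami)$ bounds a small transverse disc in $U$; by choosing the base point in the fibre well away from the critical points of $p|_\dfami$, this disc lifts to an embedded $2$-disc in $S\smin\dfami$ whose boundary is $\gamma_s$ viewed via the section. Attaching such $2$-cells kills each $\gamma_s$ in $\pi_1(S\smin\dfami)$; since $U$ is contractible, $\pi_1(U\smin\bfami)$ is normally generated by these meridians, so the entire image of $\pi_1(U\smin\bfami)$ in the semi-direct product dies. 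Combined with the semi-direct product relation $\gamma_s t_i \gamma_s\inv = t_i^{\beta_s}$, this leaves the presentation
\[
\langle\, t_1,\dots,t_n \:\big|\: t_i\inv\, t_i^{\beta_s},\ 1\leq i\leq n,\ \text{all } \beta_s \,\rangle,
\]
exactly as claimed.

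The main obstacle is to rule out further relations coming from higher-codimensional strata of $\tilde\bfami$ or from self-intersections of the lifted discs $\tilde\Delta_s$. This demands the polynomial-cover / Lyashko--Looijenga machinery referred to in the text (cf.\ \cite{hansen} and \cite{L11}) to exhibit $S\smin\dfami$ as homotopy equivalent to the $2$-complex obtained from $E\smin\dfami$ by attaching one $2$-cell per meridian of $\bfami$. This is precisely where the horizontality of $\dfami$ is decisive, since it forces $\dfami\cap\tilde\bfami$ to be of positive codimension in $\tilde\bfami$ and makes the branched-cover picture stratified in a sufficiently controlled way.
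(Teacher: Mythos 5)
The paper itself does not prove this statement: it is quoted as the classical Zariski--van Kampen theorem, with the proof delegated to \cite{vK} and the more detailed expositions in \cite{IJM,L11}. Your sketch follows exactly the route those sources take --- punctured-disc fibration over the complement of $\bfami$, a section near the boundary of the disc giving the split extension $1\to F_n\to \pi_1(E\smin\dfami)\to\pi_1(U\smin\bfami)\to1$ with conjugation acting through the braid monodromy, and then killing the image of the section by re-inserting the fibres over $\bfami$ --- and your algebraic bookkeeping (quotient of the semidirect product by the normal closure of the section subgroup yields $\langle t_1,\dots,t_n\mid t_i\inv t_i^{\beta_s}\rangle$) is correct. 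Two remarks. First, the step you flag as the main obstacle (no relations beyond the monodromy relations) is indeed the whole content of the theorem beyond formal algebra; the standard way to close it is a stratified general-position argument: over the smooth points of $\bfami$ re-inserting the fibres attaches, up to homotopy relevant for $\pi_1$, exactly the $2$-cells along the meridians $\gamma_s$, while the singular locus of $\bfami$ has complex codimension at least $2$ in $U$, so filling in the fibres over it affects neither generators nor relations of $\pi_1$. Second, your stated reason --- that horizontality forces $\dfami\cap\tilde\bfami$ to have positive codimension in $\tilde\bfami$ --- is not where horizontality actually enters; its role is earlier, guaranteeing a good representative on which $p|_{\dfami}$ is finite and $\dfami$ stays away from the boundary of the disc fibres, which is what produces the trivial disc-bundle picture, the constant fibre cardinality $n$, the existence of your boundary section, and the lifts of the transverse discs you attach.
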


The consideration above applies again to the hypersurface germ
$\bfami$ in the affine space germ $\CC^k,0$ provided we find
a projection for which $\bfami$ is horizontal.
In fact this put a constraint on a discriminant curve as
we will see in the following proof.

%Upon the choice of a bivariate polynomial germ $g_1$ 
%vanishing at $0$, we may arbitrarily add further $g_i$ to
%get a versal unfolding of a given plane curve singularity $f$.

%Then projection along the coordinate of $g_1$ brings $\bfami$
%in the position to be understood in terms of braid monodromy
%again.

\begin{prop}
Let $g_1$ be a bivariate polynomial germ vanishing at $0$
such that the discriminant curve of the unfolding
\[
f - u + t g_1
\]
is reduced.
Then the fundamental group of the complement of a corresponding
unfolded discriminant curve is equal to the discriminant knot
group of $f$.
\end{prop}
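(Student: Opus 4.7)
The plan is to apply the Zariski van Kampen theorem cited above twice, and to match the two resulting finite presentations.

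First, apply it to the full discriminant $\dfami\subset\CC^{k+1}$ using the projection $p$ that forgets $u$; horizontality of $\dfami$ for $p$ has already been established, so the theorem presents the discriminant knot group of $f$ with $\mu$ generators $t_1,\dots,t_\mu$ coming from a geometric basis in the punctured fibre, and with relations given by the braid monodromy group, that is, the image of the induced map $\pi_1(\CC^k\smin\bfami)\to Br_\mu$ via Lyashko-Looijenga.

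Next, let $\Pi_0\subset\CC^{k+1}$ be the two-plane spanned by the $u$- and $g_1$-directions, so that $\dfami\cap\Pi_0$ is the discriminant curve of $f-u+tg_1$, and let $\Pi_c$ be a small parallel translate so that $\Pi_c\cap\dfami$ is a corresponding unfolded discriminant curve. Projection to the $t$-line yields $\tilde p\colon\Pi_c\to\CC$. Because $f$ has isolated singularity, the fibre $\tilde p\inv(0)$ of the central plane meets $\dfami$ only at the origin, so $\dfami\cap\Pi_0$ is horizontal for $\tilde p$. The reducedness hypothesis ensures that this horizontality persists for $\Pi_c\cap\dfami$ when $c$ is small, and that the generic fibre of $\tilde p$ still meets $\Pi_c\cap\dfami$ in $\mu$ points. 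Applying the van Kampen theorem to $\Pi_c\cap\dfami$ produces a presentation of $\pi_1(\Pi_c\smin\Pi_c\cap\dfami)$ with the same $\mu$ generators, now with relations coming from braid monodromy around the finitely many points where the $t$-line meets the bifurcation locus of the two-parameter family.

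The key remark is that the Lyashko-Looijenga map of the two-parameter family is literally the restriction to the $t$-line of the full Lyashko-Looijenga map; so every braid appearing in the second presentation already appears in the first, immediately yielding a surjection from the second group onto the discriminant knot group. The main obstacle, and the essential use of the hypothesis, is to go the other way: one must show that meridians in the $t$-line around its intersection points with $\bfami$ already generate the full braid monodromy group. Here the reducedness of $\dfami\cap\Pi_0$ is decisive. It forces $\Pi_0$ to be in sufficiently general position with respect to $\dfami$ that a version of the local hyperplane theorem of Hamm and Le, applied to $\bfami\subset\CC^k$ with respect to the $t$-line, yields exactly this generation statement; were $\dfami\cap\Pi_0$ non-reduced, a branch of $\dfami$ would be tangent to $\Pi_0$ and the corresponding meridian class would be missed. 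Once this generation is verified the two presentations coincide, whence the isomorphism of groups.
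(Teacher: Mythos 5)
Your overall architecture is the same as the paper's: apply the van Kampen theorem twice, note that the Lyashko--Looijenga map of the one-parameter family is the restriction of the full one (so one surjection is immediate), and reduce everything to showing that the meridians on the line parallel to the $t$-axis already account for the whole braid monodromy group. But at exactly this decisive step your argument has a genuine gap. You justify it by invoking ``a version of the local hyperplane theorem of Hamm and Le'' for $\bfami\subset\CC^k$ with respect to the $t$-line, claiming that reducedness of $\dfami\cap\Pi_0$ forces the needed general position. There is no such version for a non-generic line, and the $t$-direction is precisely the possibly non-generic direction the whole paper is built to handle: if genericity were available, the proposition would follow at once from Hamm--Le applied to $\dfami$ itself (the paper's proof says so parenthetically) and there would be nothing to prove. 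As written, your key step is either circular or unsupported, and your heuristic dichotomy (``non-reduced iff a branch of $\dfami$ is tangent to $\Pi_0$'') is not the right one either.

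What the paper does instead, and what your proposal is missing, is the translation of the reducedness hypothesis into a horizontality statement for the \emph{bifurcation} set. If the $t$-axis were contained in $\bfami$, then over every $t$ the discriminant curve (which has degree $\mu$ over the $t$-line) would have fewer than $\mu$ points, hence would be non-reduced; so reducedness forces the $t$-axis to meet $\bfami$ only at the origin, i.e.\ $\bfami$ is horizontal for the projection along the coordinate corresponding to $g_1$. The non-generic van Kampen theorem quoted in the paper, applied now to the germ $\bfami$ with this projection, then says that a geometric basis in a generic fibre --- a generic line parallel to the $t$-axis --- generates $\pi_1(\CC^k\smin\bfami)$. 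Since the unfolded discriminant curve lies over exactly such a line, the restricted and the full braid monodromy groups coincide, and the two van Kampen presentations give the same group. Note in particular that the relevant failure mode for non-reducedness is the $t$-axis lying inside $\bfami$, not a tangency of a branch of $\dfami$ with $\Pi_0$; a mere tangency neither causes non-reducedness nor obstructs the horizontality argument.
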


\begin{proof}
Without loss of generality we may assume that $g_1$ is the first of the
functions in the versal unfolding of $f$ we consider.
Hence the complement of the unfolded discriminant curve
is a vertical plane section of the discriminant.
(At this point we could conclude with the local Zariski hyperplane
section theorem, if this vertical plane were known to be generic.)

By the van Kampen theorem, it suffices to show
that the two braid monodromy groups are equal.
They in turn are homomorphic images of the corresponding
fundamental groups of complements to the bifurcation set.
\medskip

If the discriminant curve is reduced, then the corresponding curve 
in the affine space germ $\CC^k,0$ does not belong to the bifurcation set,
otherwise, the discriminant curve has less than $\mu$
points over every $t$ and is non-reduced.

We deduce that the bifurcation set is horizontal for the projection
along the coordinate corresponding to $g_1$, since the $0$-fibre of that
projection was just shown not to be in the bifurcation set.

In particular the fundamental group in a generic vertical line
is generated by elements corresponding to a geometric basis.
They also generate the fundamental group of the complement to
the bifurcation set by the van Kampen theorem.

Put differently the fundamental group the smaller set surjects
onto the fundamental group of the complement to the bifurcation set.
Hence both fundamental groups map to same braid monodromy
group.

Since a generic vertical line is the image under $p$ of an unfolded 
discriminant curve associated to $g_1$ as in the beginning of the proof,
we have precisely shown what was needed.
\end{proof}

\begin{proof}[Proof of the main Theorem]
Thanks to Prop.\ref{nonBP} it suffices to show
that the discriminant knot group is constant along each
$\mu$-constant stratum which contains a Brieskorn Pham
polynomial $y^3+x^{\nu+1}$.

Let $f$ be any function in this stratum and $\sigma_f$
its $\sigma$-invariant.
Since the analytic equivalence class of $f$ has a representative
of the form
\[
y^3 - P(x)y + x^{\nu+1}\quad \text{with} \quad 
\mbox{$\frac23$}(\nu+1)\leq\ord P,\quad
\deg P \leq \nu-1
\]
we deduce $2\nu \leq 3\sigma_f - 2$ and $\sigma_f \leq \nu$.

Therefore by Prop.\ref{exist} we can unfold the Brieskorn Pham polynomial
by a parameter $s$ such that the $\sigma$-invariant  is $\sigma_f$
for $s\neq0$, and there exists an associated family of
discriminant curves of constant topological type.

Because they are reduced, we can apply the previous proposition to 
see that corresponding unfolded discriminant curves have a complement
with fundamental group isomorphic to the respective discriminant knot
groups.
So by the same argument as in the proof of Prop.\ref{nonBP}
the two discriminant knot groups are isomorphic.

Since $f$ and any deformation of the Brieskorn Pham polynomial with $s$ small
share the same $\sigma$-invariant, we may invoke Cor.\ref{pham_cor}
to have topologically equivalent complements of unfolded
generic discriminant curves.
Thus again the
discriminant knot groups are isomorphic and therefore
constant along the $\mu$-constant stratum of each Brieskorn Pham
polynomial.
\end{proof}

%%%%%%%%%%%%%%%%%%%%%%%%%%%%%%%%
\subsection*{Acknowledgement} 
The author wants to thank Patrick Popescu-Pampu for the invitation to give
a talk at Lille which ultimately caused this project to find its shape.
He gratefully acknowledges the financial support of the ERC 2013 Advanced Research Grant 340258-TADMICAMT.

%%%%%%%%%%%%%%%%%%%%%%%%%%%%%%%%
\bibliography{BibBrieskorn}

\bibliographystyle{alpha}

%\cite{IJM}, \cite{ArnoldInvent}, \cite{TeissierHunting}, \cite{BrieskornCargese}, \cite{PhamCargese}, \cite{Duke}, \cite{CR07},\cite{GL76},\cite{Gr77},\cite{Gr78}

\end{document}